\newcommand{\Obj}{\operatorname{Obj}}
\newcommand{\Hom}{\operatorname{Hom}}
\newcommand{\Cstar}{C^\ast}
\newcommand{\Cstarl}{\Cstar(\Lambda)}
\newcommand{\field}[1]{\mathbb{#1}}
\newcommand{\NN}{\field{N}}
\newcommand{\Lmin}{\Lambda^{\textrm{min}}}
\newcommand{\FE}{\textrm{FE}}
\newtheorem{thm}{Theorem}[section]
\newtheorem{lem}[thm]{Lemma}
\newtheorem{prop}[thm]{Proposition}
\newtheorem{claim}{Claim}
\theoremstyle{definition}
\newtheorem{defn}[thm]{Definition}
\theoremstyle{remark}
\newtheorem{rmk}[thm]{Remark}
\newtheorem{rmks}[thm]{Remarks}
\numberwithin{equation}{section}
\title{Simplicity of finitely-aligned $k$-graph $C^\ast$-algebras}
\author{Jacob Shotwell}
\address{Jacob Shotwell\\ Department of Mathematics and Statistics\\ Arizona State University \\ Tempe, AZ 85281\\ USA} 
\email{shotwell@asu.edu}
\date{\today}
\begin{document}
\maketitle
\begin{abstract}
It is shown that \emph{no local periodicity} is equivalent to the \emph{aperiodicity condition} for arbitrary finitely-aligned $k$-graphs. This allows us to conclude that $\Cstarl$ is simple if and only if $\Lambda$ is cofinal and has no local periodicity. 
\end{abstract}

\section{Introduction}
Kumjian and Pask introduced $k$-graph $\Cstar$-algebras in \cite{kp} as generalizations of the higher-rank Cuntz-Krieger algebras studied by Robertson and Steger in \cite{rs99}. There are two immediate difficulties that arise in the theory of $k$-graphs. The first difficulty is presented by sources. For directed graphs, a source is simply a vertex that receives no edge. For $k$-graphs, a source is a vertex that fails to receive an edge of some degree. The notion of local convexity was introduced in \cite{rsyconvex} in order to associate a $\Cstar$-algebra to certain well-behaved $k$-graphs with sources. The second major obstruction in studying $k$-graphs is presented by infinite receivers. \emph{Finitely-aligned} $k$-graphs were introduced in \cite{finali} in order to associate a $\Cstar$-algebra to row-infinite $k$-graphs graphs (possibly containing sources) that satisfy a mild condition. 

In \cite{kp}, Kumjian and Pask introduce an aperiodicity condition for row-finite $k$-graphs without sources and show that if $\Lambda$ satisfies this aperiodicity condition, then $\Cstarl$ is simple if and only if $\Lambda$ is cofinal. The aperiodicity condition of Kumjian and Pask also serves as a critical hypothesis for a number of important structural results concerning $k$-graph $\Cstar$-algebras. A number of different aperiodicity conditions have appeared in the literature for the variety of classes of $k$-graphs \citelist{\cite{rsyconvex}\cite{finali}\cite{sims20061}\cite{fmy}\cite{robsims20071}\cite{robsims20072}}. 

For row-finite $k$-graph without sources, Robertson and Sims introduce the notion of \emph{no local periodicity} \citelist{\cite{robsims20071}}. This formulation of aperiodicity is formally weaker than the condition introduced by Kumjian and Pask. Nonetheless, Robertson and Sims show that no local periodicity is equivalent to a number of other aperiodicity conditions for row-finite $k$-graphs without sources. The advantage of no local periodicity is that its negation is strong enough to prove that $\Cstarl$ is simple if and only if $\Lambda$ is cofinal and has no local periodicity. Robertson and Sims furthermore use this condition to classify $k$-graph $\Cstar$-algebras in which every ideal is gauge-invariant. This work is similar to the result from directed graph algebras stating that $\Cstar(E)$ is simple if and only E is cofinal and every cycle has an exit.

In \cite{farthing2007}, Farthing constructs a sourceless $k$-graph $\bar{\Lambda}$ from a $k$-graph $\Lambda$ in such a way that $\Cstar({\bar{\Lambda}})$ is Morita equivalent to $\Cstarl$ when $\Lambda$ is row-finite. Robertson and Sims make use of this result in \cite{robsims20071} to generalize their previous work to the locally convex row-finite $k$-graphs. Robertson and Sims' simplicity result is limited to the locally-convex case because of an unexpected difficulty with projecting paths from $\bar{\Lambda}^\infty$ onto $\Lambda^{\leq \infty}$. 

For the finitely-aligned case, a number of aperiodicity conditions have appeared, often defined on different boundary path spaces.  In \cite{finali}, Raeburn, Sims, and Yeend use a similar condition to \emph{Condition B} from \cite{rsyconvex} to prove their version of the Cuntz-Krieger uniqueness theorem. Farthing, Muhly, and Yeend introduce a version of Kumjian and Pasks' aperiodicity condition in \cite{fmy} to prove a version of the Cuntz-Krieger uniqueness theorem using groupoid methods. The condition in \cite{fmy} is much different than that in \cite{rsyconvex}, partly because it operates on the closure of the boundary path space employed by Robertson, Sims, and Yeend. 

In this paper, the work of Robertson and Sims is generalized to the finitely-aligned case. We show that the condition in \cite{fmy} is equivalent to an appropriate formulation of no local periodicity. In Section 2, we briefly introduce the standard definitions and results from the literature. In Section 3, we introduce a condition called \emph{strong no local periodicity} for finitely-aligned $k$-graphs without sources and show that the condition is equivalent to no local periodicity in this situation. This allows us to exactly follow the proof of \cite[Lemma 2.2]{robsims20071} to prove that no local periodicity implies the aperiodicity condition in \cite{fmy}. We then show how to reduce the arbitrary finitely-aligned case to that of no sources by introducing a sourceless $(k-a)$-graph that carries information about aperiodic paths in the original $k$-graph. In Section 4, we use these results to construct the usual simplicity argument as in \cite{robsims20071} and \cite{robsims20072}.

I would like to thank my advisor, Jack Spielberg, for his help with these results.

\section{Preliminaries}

Let $k \in \NN$ and regard $\NN^k$ as a monoid with identity $0$. Let $e_i$ denote the $i^\textrm{th}$ generator of $\NN^k$. For $m, n \in \NN^k$ write $m \leq n$ to mean $m_i \leq n_i$ for $i=1,2,\ldots,k$. For $m,n \in \NN$ let $m \vee n$ and $m \wedge n$ denote the pairwise maximum and minimum of $m$ and $n$, respectively.

\begin{defn} A $k$-graph consists of a countable small category $\Lambda$ together with a functor $d: \Lambda \rightarrow \mathbf{N}^k$ which satisfies the \emph{unique factorization property}: For every $\lambda \in \Lambda$ and $m , n \in \mathbf{N}^k$ such that $d(\lambda)=m+n$, there exist unique $\nu, \mu \in \Lambda$ such that $\lambda = \mu \nu$, $d(\mu)=m$, and $d(\nu)=n$. \end{defn}

Let $\Lambda^n=d^{-1}(n)$ and let $r$ and $s$ denote the range and source maps of $\Lambda$ respectively. $\Obj(\Lambda)$ is naturally identified with $\Lambda^0$ via the unique factorization property and thus $r,s: \Lambda \rightarrow \Lambda^0$. For $v \in \Lambda^0$ and $E \subseteq \Lambda$, put $vE = \{ \mu \in E : r(\mu) = v\}$ and $E v = \{ \mu \in E : s(\mu) = v\}$.

For $n \in \NN^k$, define 
$$\Lambda^{\leq n}=\left\lbrace \lambda \in \Lambda : d(\lambda) \leq n \ \textrm{and} \ d(\lambda) + e_i \leq n \Rightarrow s(\lambda) \Lambda^{e_i} = \emptyset \right\rbrace$$ 

Note that $v \Lambda^{\leq n} \neq \emptyset$ for all $v \in \Lambda^0$ and $n \in \NN^k$. Furthermore, $\Lambda^{\leq n} = \Lambda^n$ if $\Lambda$ has no sources.

 Given $\lambda, \mu \in \Lambda$, a minimal common extension of $\lambda$ and $\mu$ is a pair $(\alpha,\beta) \in \Lambda \times \Lambda$ such that $\lambda \alpha = \mu \beta$ and $d(\lambda \alpha) = d(\lambda) \vee d(\mu)$. The set of minimal common extensions of $\lambda$ and $\mu$ is denoted by $\Lmin(\lambda,\mu)$. Define $\textrm{MCE}(\lambda,\mu)=\{ \lambda \alpha : (\alpha,\beta) \in \Lmin(\lambda,\mu) \}$.

\begin{defn} A $k$-graph $\Lambda$ is \emph{finitely aligned} if $\Lmin(\lambda,\mu)$ is finite for all $\lambda,\mu \in \Lambda$.\end{defn}

\begin{defn} Let $\Lambda$ be a $k$-graph, $v \in \Lambda^0$, and $E \subseteq v \Lambda$. $E$ is \emph{exhaustive} if for every $\mu \in v \Lambda$ there is $\lambda \in E$ such that $\Lmin(\lambda,\mu) \neq \emptyset$. Define 
$$\FE(\Lambda) = \{ F \subseteq v \Lambda\backslash \ | \  v \in \Lambda^0, \ \textrm{F is finite exhaustive}\}.$$\end{defn}

\begin{rmk}
 If $\Lambda$ has no sources, then $\Lambda^{n}$ is exhaustive for all $n \in \NN^k$. More generally, $\Lambda$ is locally convex if and only if $\Lambda^{\leq n}$ is exhaustive for all $n \in \NN^k$.
\end{rmk}

\begin{defn} For $\eta \in \Lambda$ and $F \subseteq r(\eta) \Lambda$, $$\textrm{Ext}(\eta;F) := \bigcup_{\lambda \in F} \{\alpha \in \Lambda \ | \ (\alpha,\beta) \in \Lmin(\eta,\lambda) \ \textrm{for some} \ \beta \in \Lambda\}.$$ 
\end{defn}

If $F \in v \FE(\Lambda)$ and $\eta \in v \Lambda$, then $\textrm{Ext}(\eta;F) \in s(\eta) \FE(\Lambda)$.

\begin{defn}
Let $(\Lambda,d)$ be a finitely aligned $k$-graph. A \emph{Cuntz-Krieger $\Lambda$-family} is a collection of partial isometries $\{ s_\lambda : \lambda \in \Lambda\}$ in a $\Cstar$-algebra $B$ satisfying
\begin{enumerate}
  \item $\{ s_v : v \in \Lambda^0 \}$ is a family of mutually orthogonal projections.
  \item $s_{\lambda} s_{\mu} = s_{\lambda \mu}$ when $s(\lambda) = r(\mu)$.
  \item $s^{\ast}_{\lambda} s_{\mu} = \sum_{(\alpha,\beta) \in \Lmin(\lambda,\mu)} s_\alpha s^\ast_\beta$ for all $\lambda, \mu \in \Lambda$.
  \item $\prod_{\lambda \in E} (s_v - s_\lambda s_\lambda^\ast)=0$ for all $E \in v \FE(\Lambda)$. 
\end{enumerate}
\end{defn}

Denote by $\Cstarl$ the universal $\Cstar$-algebra containing a Cuntz-Krieger $\Lambda$ family.

\subsection{Boundary Paths}
Given a finitely aligned $k$-graph, let $X_\Lambda$ be the collection of graph morphisms $x: \Omega_{k,m} \rightarrow \Lambda$. For such $x$, define $d(x)=m$. Let $\Lambda^{\leq \infty}$ be the collection of paths $x \in X_\Lambda$ for which there is  $n_x \in \NN^k$ such that $n_x \leq d(x)$ and $$n \in \NN^k, n_x \leq n \leq m \ \textrm{and} \ n_i=m_i \ \textrm{imply that} \ x(n)\Lambda^{e_i}=\emptyset.$$ Note that when $\Lambda$ is locally convex, we may take $n_x = 0$. 

Let $\partial \Lambda$ be the collection of paths $x \in X_\Lambda$ such that for all $n \leq d(x)$ and for all finite exhaustive $E \subseteq x(n)\Lambda$, there is $\lambda \in E$ such that $x(n,n+d(\lambda))=\lambda$. We have $\Lambda^{\leq \infty} \subseteq \partial \Lambda$, but $\Lambda^{\leq \infty} \neq \partial \Lambda$ in general. If $\Lambda$ has no sources, then $\Lambda^{\leq \infty} = \Lambda^{\infty}$.

If $x \in \partial \Lambda$ and $n \geq d(x)$, define $\sigma^n x$ by $\sigma^n x (0,p) = x(n,n+p)$ for all $p \leq d(x)-n$. Then $\sigma^n x \in \partial \Lambda$. If $\lambda \in \Lambda x(0)$, there is a unique path $\lambda x \in \partial \Lambda$ such that $\lambda x (0,d(\lambda))=\lambda$ and $\lambda x(0,p) = \lambda x(0,p-d(\lambda))$ for $p \in \NN^k$ satisfying $p+d(\lambda) \leq d(x)$. Recall that, if $x \in \Lambda^{\leq \infty}$, $n \geq d(x)$, and $\lambda \in \Lambda x(0)$, then $\lambda x \in \Lambda^{\leq \infty}$ and $\sigma^n x \in \Lambda^{\leq \infty}$.

For each $\lambda \in \partial \Lambda$, define $S_\lambda \in \mathcal{B}(\ell^2(\partial \Lambda)))$ by 

$$S_\lambda e_x = \begin{cases}
                   e_{\lambda x} &\text{if} \ r(x)=\lambda \\ 
		   0 		&\text{else} \\
                  \end{cases}
$$

\begin{prop} The operators $\{S_\lambda : \lambda \in \Lambda \}$ form a Cuntz-Krieger $\Lambda$-family on $\ell^2(\partial \Lambda)$ such that $S_v \neq 0$. This is called the \emph{boundary-path representation}. \end{prop}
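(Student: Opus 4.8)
The plan is to verify, directly on the orthonormal basis $\{e_x : x \in \partial\Lambda\}$ of $\ell^2(\partial\Lambda)$, that the operators $S_\lambda$ satisfy the four Cuntz--Krieger relations. The starting point is a formula for the adjoints: since $\eta z = \eta z'$ forces $z = \sigma^{d(\eta)}(\eta z) = \sigma^{d(\eta)}(\eta z') = z'$, the operator $S_\lambda$ sends distinct basis vectors $e_x$ with $r(x) = s(\lambda)$ to distinct basis vectors and kills the orthogonal complement, so it is a partial isometry; computing $\langle S_\lambda^* e_y, e_x\rangle = \langle e_y, S_\lambda e_x\rangle$ then gives $S_\lambda^* e_y = e_{\sigma^{d(\lambda)} y}$ when $y(0,d(\lambda)) = \lambda$ (which in particular forces $d(\lambda) \le d(y)$) and $S_\lambda^* e_y = 0$ otherwise. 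It is convenient to note that $S_v$, $S_\lambda S_\lambda^*$ and $S_\lambda^* S_\lambda$ then all act diagonally in this basis.

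Relations (1), (2) and (4) are now short. For (1), $S_v$ is the diagonal projection onto $\overline{\operatorname{span}}\{e_x : r(x) = v\}$, and these are mutually orthogonal because each boundary path has a single range. For (2), when $s(\lambda) = r(\mu)$ one checks on $e_x$ that both $S_\lambda S_\mu e_x$ and $S_{\lambda\mu} e_x$ vanish unless $r(x) = s(\mu)$, in which case both equal $e_{\lambda(\mu x)} = e_{(\lambda\mu)x}$ by the uniqueness clause in the construction of paths $\eta z$. For (4), the adjoint formula shows $S_\lambda S_\lambda^*$ is the diagonal projection onto $\overline{\operatorname{span}}\{e_x : x(0,d(\lambda)) = \lambda\}$, so for $E \in v\FE(\Lambda)$ the projections $S_v - S_\lambda S_\lambda^*$ for $\lambda \in E$ are diagonal, hence commute, and their product sends $e_x$ to $e_x$ exactly when $r(x) = v$ and $x(0,d(\lambda)) \ne \lambda$ for every $\lambda \in E$. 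But when $x \in v\partial\Lambda$, applying the defining property of $\partial\Lambda$ at $n = 0$ to the finite exhaustive set $E \subseteq x(0)\Lambda$ produces some $\lambda \in E$ with $x(0,d(\lambda)) = \lambda$; so the product annihilates every basis vector, which is relation (4).

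The substantive computation is relation (3). The key structural fact is that every pair $(\alpha,\beta) \in \Lmin(\lambda,\mu)$ satisfies $d(\alpha) = (d(\lambda) \vee d(\mu)) - d(\lambda)$ and $d(\beta) = (d(\lambda) \vee d(\mu)) - d(\mu)$; in particular all such pairs have the same pair of degrees, and with unique factorization this makes $(\alpha,\beta) \mapsto \beta$ injective on $\Lmin(\lambda,\mu)$. From the adjoint formula, $S_\alpha S_\beta^* e_x = e_{\alpha\, \sigma^{d(\beta)} x}$ if $x(0,d(\beta)) = \beta$ and $0$ otherwise, and since all the $\beta$'s have the same degree, at most one pair can satisfy $x(0,d(\beta)) = \beta$; hence the right-hand side of (3) applied to $e_x$ has at most one nonzero term. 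If $S_\lambda^* S_\mu e_x \ne 0$, then $r(x) = s(\mu)$ and, writing $z = \mu x$, we have $z(0,d(\lambda)) = \lambda$; putting $\alpha_0 = z(d(\lambda), d(\lambda) \vee d(\mu))$ and $\beta_0 = z(d(\mu), d(\lambda) \vee d(\mu))$ gives $(\alpha_0,\beta_0) \in \Lmin(\lambda,\mu)$ with $x(0,d(\beta_0)) = \beta_0$, and since $\sigma^{d(\lambda)\vee d(\mu)} z = \sigma^{d(\beta_0)} x$ one gets $\sigma^{d(\lambda)} z = \alpha_0\, \sigma^{d(\beta_0)} x$, so $S_\lambda^* S_\mu e_x = e_{\sigma^{d(\lambda)}z} = S_{\alpha_0} S_{\beta_0}^* e_x$ and the two sides agree and are nonzero. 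Conversely, if some $(\alpha,\beta) \in \Lmin(\lambda,\mu)$ contributes a nonzero term on the right, then $r(x) = r(\beta) = s(\mu)$, and since $d(\mu) + d(\beta) = d(\lambda) \vee d(\mu) \ge d(\lambda)$ and $(\mu x)(0, d(\mu)+d(\beta)) = \mu\beta = \lambda\alpha$, we get $(\mu x)(0,d(\lambda)) = \lambda$, so $S_\lambda^* S_\mu e_x \ne 0$; thus the left side vanishes only when the right side does, and (3) follows. I expect the main obstacle here to be not conceptual but the careful tracking of the order relations $d(\beta) \le d(x)$, $d(\lambda) \vee d(\mu) \le d(\mu x)$, and the like — these are genuine hypotheses because boundary paths of a $k$-graph with sources may be finite in some coordinate directions, whereas in the sourceless case they are automatic.

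Finally, $S_v \ne 0$ for every $v \in \Lambda^0$: it is standard that $v\Lambda^{\le\infty} \ne \emptyset$ (one may build such a path inductively using finite alignment and the fact that $v\Lambda^{\le n} \ne \emptyset$ for all $n$), and $\Lambda^{\le\infty} \subseteq \partial\Lambda$, so choosing any $x \in v\Lambda^{\le\infty}$ we have $S_v e_x = e_x \ne 0$.
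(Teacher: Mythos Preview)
The paper does not actually prove this proposition: it is stated in the Preliminaries section as a standard fact from the literature (the boundary-path representation for finitely-aligned $k$-graphs was established in \cite{finali} and \cite{fmy}), so there is no paper proof to compare against.

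Your direct verification is correct and is essentially how the result is proved in the original sources. The computation of $S_\lambda^\ast$ is right, relations (1), (2), and (4) go through as you say (in particular your use of the defining property of $\partial\Lambda$ at $n=0$ to kill the product in (4) is exactly the point of working over $\partial\Lambda$ rather than $X_\Lambda$), and your argument for (3) is complete: the observation that all $\beta$'s in $\Lmin(\lambda,\mu)$ share the same degree, so that at most one term in the sum survives on each basis vector, together with the two-direction check that the surviving term agrees with $S_\lambda^\ast S_\mu e_x$, is the standard route. Your remark that the degree inequalities such as $d(\lambda)\vee d(\mu) \le d(\mu x)$ need genuine care when boundary paths can have finite coordinates is well taken and your handling of them is correct. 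Finally, $S_v \ne 0$ follows from $v\Lambda^{\le\infty} \ne \emptyset$ and $\Lambda^{\le\infty} \subseteq \partial\Lambda$, both of which the paper records in its preliminaries.
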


\section{Aperiodicity Conditions}

\begin{defn}Let $\Lambda$ be a finitely-aligned $k$-graph. $\Lambda$ satisfies the \emph{aperiodicity condition} if for every $v \in \Lambda^0$ there exists $x \in v \partial \Lambda$ such that $\sigma^m x = \sigma^n x$ implies $m=n \in \NN^k$ for all $m,n \leq d(x)$. \end{defn}

\begin{defn} Let $\Lambda$ be a finitely-aligned $k$-graph. $\Lambda$ has \emph{no local periodicity} (NLP) if for every $v \in \Lambda^0$ and every $m \neq n \in \NN^k$, there exists $x \in v\partial \Lambda$ such that either $d(x) \ngeq m \vee n$ or $\sigma^m x \neq \sigma^n x$.\end{defn}

\begin{defn} Let $\Lambda$ be a finitely-aligned $k$-graph without sources. $\Lambda$ has \emph{strong no local periodicity} (SNLP) if for every $v \in \Lambda^0$ and every $m \neq n \in \NN^k$, there exists $x \in v \partial \Lambda$ such that $d(x) \geq m \vee n$ and $\sigma^m x \neq \sigma^n x$.
 \end{defn}

\begin{rmks} 
\mbox{}
\begin{itemize}
 \item Robertson and Sims use a different version of no local periodicity for row-finite locally convex $k$-graphs \cite{robsims20072}. For row-finite locally convex $k$-graphs, they prove that the two notions are equivalent.
 \item If no local periodicity fails at $v \in \Lambda^0$, then there are $n \neq m \in \NN^k$ such that $\sigma^n x = \sigma^m x$ for all $x \in v \partial \Lambda$. In this case, $\Lambda$ has \emph{local periodicity $n$, $m$ at $v \in \Lambda^0$}. For row-infinite finitely-aligned $k$-graphs (with or without sources) and fixed $n \neq m \in \NN^k$, there may exist boundary paths $x \in v \partial \Lambda$ such that $d(x) \ngeq n \vee m$. It is not immediately clear whether or not $\Lambda$ can satisfy no local periodicity, yet satisfy $\sigma^n x = \sigma^m x$ whenever $d(x) \geq n \vee m$ for some $n \neq m \in \NN^k$. The next section will establish that this is not possible for finitely-aligned $k$-graphs without sources.
\end{itemize}
\end{rmks}

\subsection{Finitely-aligned, no sources}
Throughout this section, let $\Lambda$ be a finitely-aligned $k$-graph without sources.  First we show that, in this situation, NLP is equivalent to SNLP. This will allow us to use the methods of \cite[Lemma 3.3]{robsims20071} to show equivalence between the aperiodicity condition and no local periodicity. The main strategy is to realize that, if a boundary path has degree with some finite component, then since $\Lambda$ has no sources, we can find infinite receivers along the path. Our strict assumptions in this situation will provide sufficiently many edges to construct an aperiodic boundary path.

\begin{prop} $\Lambda$ satisfies NLP if and only if it satisfies SNLP. \end{prop}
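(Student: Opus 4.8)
The plan is to prove the two implications separately, with essentially all the work in NLP $\Rightarrow$ SNLP. One direction is immediate: any $x \in v\partial\Lambda$ witnessing SNLP for a pair $m \neq n$ (so $d(x) \geq m \vee n$ and $\sigma^m x \neq \sigma^n x$) also witnesses NLP for that pair, so SNLP $\Rightarrow$ NLP. For the converse I would argue the contrapositive: assume SNLP fails, so there are $v \in \Lambda^0$ and $m \neq n \in \NN^k$, with $N := m \vee n$, such that
\[(\ast)\qquad x \in v\partial\Lambda,\ d(x) \geq N \ \Longrightarrow\ \sigma^m x = \sigma^n x.\]
The goal is to produce a vertex $u$ and a pair $m' \neq n'$ for which NLP fails, i.e. for which \emph{every} $x \in u\partial\Lambda$ has $d(x) \geq m' \vee n'$ and $\sigma^{m'}x = \sigma^{n'}x$.

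Two preliminary facts, both using that $\Lambda$ has no sources, underlie everything. (i) \emph{Concatenation:} if $y \in v\partial\Lambda$ has finite degree $p = d(y)$ and $z \in s(y)\partial\Lambda$, then $yz \in v\partial\Lambda$ --- the boundary condition at a vertex $y(q)$ with $q \neq p$ only becomes easier to meet when $y$ is lengthened, and the boundary condition for $yz$ at the vertex $s(y)$ is exactly the one $z$ already satisfies at its source. Since $\Lambda$ has no sources, $s(y)\Lambda^{\leq\infty} = s(y)\Lambda^{\infty} \subseteq s(y)\partial\Lambda$ is nonempty, so every finite-degree $y \in v\partial\Lambda$ extends to a $\widetilde y \in v\partial\Lambda$ with $d(\widetilde y) = (\infty,\dots,\infty)$ agreeing with $y$ on $\Omega_{k,p}$. (ii) \emph{Structure:} if $y \in v\partial\Lambda$ has finite degree and $q \leq d(y)$ with $q_i = d(y)_i$, then $y(q)\Lambda^{e_i}$ is infinite; otherwise finite alignment shows $y(q)\Lambda^{e_i}$ is itself an exhaustive subset of $y(q)\Lambda$ (given $\mu$, extend it in direction $i$ using no sources to find a common extension with an $e_i$-initial segment), which contains no path whose degree has $i$th coordinate $0$, contradicting the boundary condition for $y$ at $q$.

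Now the decisive step: $(\ast)$ precludes a boundary path $y \in v\partial\Lambda$ with $d(y) \ngeq N$. First, using (i) and $(\ast)$, replacing $(v, m, n)$ by $(x_0(m \wedge n),\, m - m\wedge n,\, n - m\wedge n)$ for a fixed $x_0 \in v\Lambda^{\infty}$ again gives a vertex and pair satisfying an analogue of $(\ast)$, now with the pair having disjoint supports; so assume henceforth $m \wedge n = 0$, hence $N = m + n$, and it suffices to show every $x \in v\partial\Lambda$ has $d(x) \geq N$, for then NLP fails at $v$ for $m, n$. Suppose instead some $y \in v\partial\Lambda$ has $d(y) \ngeq N$, and pick $i$ with $d(y)_i < N_i$; by disjointness $N_i = m_i$, say, and $n_i = 0$. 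By (ii) the direction-$i$ terminal face of $y$ consists of infinite $e_i$-emitters, so --- completing to elements of $v\Lambda^{\infty}$ via (i) and no sources --- one can build two extensions $x, x' \in v\partial\Lambda$ of $y$, both of degree $(\infty,\dots,\infty)$, that disagree on a single direction-$i$ edge placed at a position past $d(y)_i$ where the choice is genuinely free. Both $x, x'$ are $(m,n)$-periodic by $(\ast)$; but $(m,n)$-periodicity with $n_i = 0$ ties each direction-$i$ edge at $i$-depth $\geq m_i$ to a direction-$i$ edge at smaller $i$-depth --- ultimately to a fixed direction-$i$ edge of $y$ or to another free choice --- forcing $x$ and $x'$ to agree where they were built to differ. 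This contradiction completes the proof.

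The step I expect to be the main obstacle is this last construction: arranging the two competing extensions so that $(m,n)$-periodicity genuinely yields a contradiction, uniformly over all the ways $d(y) \ngeq N$ can occur ($y$ short in one or several coordinates, or with infinite coordinates), with careful bookkeeping of which lattice positions lie inside $y$ (edges determined) versus past $y$ (edges free), since it is exactly how those two regions interact under $\sigma^m$ and $\sigma^n$ that produces the contradiction. The supporting claims --- that the concatenated, completed paths really lie in $\partial\Lambda$, and that the relevant vertices really emit infinitely many edges --- are supplied by (i) and by the finite-alignment input to (ii).
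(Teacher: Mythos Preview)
Your setup is sound and parallels the paper: the reduction to $m \wedge n = 0$ is exactly the paper's Claim~1, and your facts (i) and (ii) are correct and used implicitly in the paper as well. The gap is precisely where you flag it, and it is a real one. The relation $\sigma^m x = \sigma^n x$ says $x(q,q+e_i) = x(q-m+n,\,q-m+n+e_i)$ for $q \geq m$; since $n_i = 0$ and $m_i > 0$ the $i$-coordinate does drop by $m_i$, but for any $j$ with $n_j > 0$ (and hence $m_j = 0$) the $j$-coordinate \emph{increases} by $n_j$. So iterating the periodicity constraint does not keep you inside the region $\{r : r \leq q\}$ where your two extensions $x,x'$ agree, and there is no reason it should eventually land there. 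Your phrase ``ultimately to a fixed direction-$i$ edge of $y$ or to another free choice'' names the problem without solving it: if the chain terminates at another free choice you have learned nothing, and nothing in your argument rules this out. The hoped-for contradiction simply does not materialize from the ingredients you have assembled.

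The paper supplies the missing control with an extra step you do not have: an MCE-counting argument (its Claim~2) showing that one may assume $v_1\Lambda^{n_1}$ is \emph{finite}. Roughly, if both $v_1\Lambda^{n_1}$ and $v_2\Lambda^{n_1}$ were infinite, then infinitely many $\lambda \in v_1\Lambda^n$ would have $\Lambda^{\min}(x(n,n+m),\lambda) \neq \emptyset$, and one such $\lambda \neq x(m,m+n)$ already gives a $w \in v\Lambda^\infty$ with $\sigma^n w \neq \sigma^m w$, contradicting $(\ast)$ directly. Once $v_1\Lambda^{n_1}$ is finite (hence finite exhaustive, since there are no sources), the short boundary path $z$ provided by NLP is forced to satisfy $d(z) \geq n_1$ and, by a second application of the same finiteness via $\mathrm{Ext}(\,\cdot\,;v_1\Lambda^{n_1})$, even $d(z) \geq 2n_1$. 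This pins down enough of the geometry that a second MCE-counting argument at an infinite $e_{i_0}$-emitter along $z$ produces a concrete $w \in v_1\Lambda^\infty$ with $\sigma^{n_1} w \neq \sigma^{m_1} w$. It is this finiteness of $v_1\Lambda^{n_1}$---not just the existence of infinite emitters along $y$---that tames the drift in the non-$i$ coordinates; without it your two-extension idea does not close.
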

\begin{proof}

It is clear that SNLP implies NLP. Suppose that $\Lambda$ has NLP and fails SNLP at $v \in \Lambda^0$. Then we may fix  $m \neq n \in \NN^k$ such that $\sigma^n y = \sigma^m y$ for all $y \in v \partial \Lambda$ with $d(y) \geq m \vee n$. We will derive a contradiction by constructing $w \in v \partial \Lambda$ satisfying $d(w) \geq n \vee m$ and $\sigma^m w \neq \sigma^n w$. 
Fix $x \in v \Lambda^\infty$. Set $n_1 = n \vee m - m$, $m_1 = n \vee m - n$,  $v_1 = x(n)$, and $v_2 = x(n+n)$. Note that $n_1 \wedge m_1 =0$. 
\begin{claim} $\sigma^{n_1} y = \sigma^{m_1} y$ for each $y \in v_1 \partial \Lambda$ or $y \in v_2 \partial \Lambda$ satisfying $d(y) \geq n_1 \vee m_1$.
\end{claim}

\begin{proof}
 Let $y \in v_1 \partial \Lambda$ satisfy $d(y) \geq n_1 \vee m_1$. Set $w = x(0,n) y$. Then $\sigma^n w = \sigma^m w$, since $d(w) \geq n \vee m$. In particular, $$\sigma^{n \vee m} w = \sigma^{n \vee m - n} \sigma^n w = \sigma^{n \vee m - n} y.$$
Also, $$\sigma^{n \vee m} w = \sigma^{n \vee m - m} \sigma^m w = \sigma^{n \vee m - m} \sigma^n w = \sigma^{n \vee m - m} y.$$
Therefore, $\sigma^{n_1} y = \sigma^{n \vee m - m} y = \sigma^{n \vee m - n} y = \sigma^{m_1} y$, as required.

A similar proof shows that the result holds for each $y \in v_2 \partial \Lambda$.
\end{proof}

\begin{claim} We may assume that either $v_1 \Lambda^{n_1}$ or $v_2 \Lambda^{n_1}$ is finite. \end{claim}
\begin{proof}
 Suppose that both $v_1 \Lambda^{n_1}$ and $v_2 \Lambda^{n_1}$ are infinite sets. Then $v_1 \Lambda^{n}$ and $v_2 \Lambda^{n}$ are also infinite sets because $n \geq n_1$. Also, $x(n+m)\Lambda^{n_1}$ is an infinite set because $x(n+m)=x(n+n)=v_2$. Thus, $$\{x(n,n+m)\alpha \ | \ \alpha \in x(n+m)\Lambda^{n_1} \}$$ is an infinite set. Notice that if $\alpha \in x(n+m) \Lambda^{n_1}$, then
$$d(x(n,n+m) \alpha) = m+n_1=m \vee n.$$ Thus, $x(n,n+m) \alpha \in \textrm{MCE}(x(n,n+m),\lambda)$ for some $\lambda \in v_1 \Lambda^n$. This implies that
$$\bigcup_{\lambda \in v_1 \Lambda^{n}} \textrm{MCE}(x(n,n+m),\lambda)$$ is infinite. Because $\Lambda$ is finitely aligned, $\textrm{MCE}(x(n,n+m),\lambda)$ is finite for each $\lambda \in v_1 \Lambda^{n}$. Hence, $\Lmin(x(n,n+m,\lambda))$ is non-empty for infinitely many $\lambda \in v_1 \Lambda^{n}$. 

By the above work, we may choose $\lambda \in v_1 \Lambda^{n}$ satisfying 
$$\Lmin(x(n,n+m),\lambda) \neq \emptyset$$ and $$\lambda \neq x(m,m+n).$$ Fix $(\alpha,\beta) \in \Lmin(x(n,n+m),\lambda)$, set $\xi = x(0,n+m) \alpha$, and choose $w \in v \Lambda^\infty$ such that $w(0,d(\xi)) = \xi$. Then we have:
\begin{eqnarray*}
\sigma^{n} w(0,n) &=&\lambda \\
\sigma^m w(0,n) &=& x(m,m+n).
\end{eqnarray*}
 Therefore, $\sigma^n w \neq \sigma^m w$. This contradicts our assumption that  $\sigma^n y = \sigma^m y$ for all $y \in v \partial \Lambda$ with $d(y) \geq m \vee n$.
\end{proof}

By Claim 1 and the fact that $\Lambda$ is assumed to satisfy NLP, there is $z \in v_1 \partial \Lambda$ and $i_0 \in \{1,\ldots k\}$ such that $d(z)_{i_0} < (n_1 \vee m_1)_{i_0}$. If $v_1 \Lambda^{n_1}$ is finite, then it is also exhaustive by the assumption of no sources. Hence, the definition of $\partial \Lambda$ gives $\lambda \in v_1 \Lambda^{n_1}$ satisfying $z(0,d(\lambda))=\lambda$. Thus, $d(z) \geq n_1$, which also implies $d(z)_{i_0} < (m_1)_{i_0}$. If $v_1 \Lambda^{n_1}$ is infinite, instead take $z \in v_2 \partial \Lambda$ such that $d(z)_{i_0} < (n_1 \vee m_1)_{i_0}$ for some $i_0 \in \{1,\ldots,k\}$. Since $v_2 \Lambda^{n_1}$ is finite exhaustive, we may similarly conclude that $d(z) \geq n_1$ and $d(z)_{i_0} < (m_1)_{i_0}$. Note that in either case, $n_{i_0}=0$ because $n_1 \wedge m_1 = 0$.

Suppose $v_1 \Lambda^{n_1}$ is finite, let $z \in v_1 \partial \Lambda$ be as above, and set $q=d(z)_{i_0} e_{i_0}$. We claim that $d(z) \geq n_1 + n_1$. To see this, assume otherwise. Fix $\bar{z} \in v_1 \Lambda^{\infty}$ satisfying $\bar{z}(0,q+n_1) = z(0,q+n_1)$. By Claim 1, $\sigma^{n_1} \bar{z} = \sigma^{m_1} \bar{z}$. If $d(z) \ngeq n_1 + n_1$, then $z(n_1 + q)\Lambda^{n_1}$ is infinite (otherwise we could find $\lambda \in z(n_1+q)\Lambda^{n_1}$ such that $z(n_1+q,n_1+q+d(\lambda))=\lambda$, which would give $d(z) \geq n_1 +q + n_1$). Therefore, $\bar{z}(m_1 + q) \Lambda^{n_1}$ is infinite. This is a contradiction of the assumption that $v_1 \Lambda^{n_1}$ is finite. To see this contradiction, recall that \cite[Lemma C.4]{finali} yields that $\textrm{Ext}(\eta;F)$ is finite exhaustive if $F$ is finite exhaustive. In our case, we have assumed that $v_1 \Lambda^{n_1}$ is finite exhaustive, so $\textrm{Ext}(\bar{z}(0,m_1+q),v_1 \Lambda^{n_1})$ is also finite exhaustive since we have assumed $v_1 \Lambda^{n_1} \Lambda \in \FE(\Lambda)$. Moreover, if $\alpha \in \bar{z}(m_1 + q) \Lambda^{n_1}$, then $$d(\bar{z}(0,m_1+q)\alpha) = m_1+q+n_1=(m_1+q) \vee n_1.$$  Therefore, $\alpha \in \textrm{Ext}(\bar{z}(0,m_1+q);v_1 \Lambda^{n_1})$ so that $$\bar{z}(m_1+q)\Lambda^{n_1} \subseteq \textrm{Ext}(\bar{z}(0,m_1+q);v_1 \Lambda^{n_1}).$$  Thus, we can conclude $d(z) \geq n_1 + n_1$. 

Similarly, if $v_2 \Lambda^{n_1}$ is finite, we may take $z \in v_2 \partial \Lambda$ and conclude that $d(z) \geq n_1 + n_1$. Without loss of generality, assume $v_1 \Lambda^{n_1}$ is finite and fix $z, \bar{z}$ as above.

We have $\sigma^{n_1} \bar{z} = \sigma^{m_1} \bar{z}$ by Claim 1, so $\bar{z}(n_1 + q) \Lambda^{e_{i_0}} = \bar{z}(m_1 + q) \Lambda^{e_{i_0}}$ is an infinite set. Also, the above work shows that $\bar{z}(m_1 + q + n_1) \Lambda^{e_{i_0}}$ is infinite. Arguing similarly to the proof of Claim 2,
$$\bigcup_{\lambda \in \bar{z}(m_1+q)\Lambda^{e_{i_0}}} \textrm{MCE}(\bar{z}(m_1+q,m_1+q+n_1),\lambda)$$ is an infinite set.

This implies that $\Lmin(\bar{z}(m_1+q,m_1+q+n_1),\lambda)$ is non-empty for infinitely many $\lambda \in \bar{z}(m_1 + q) \Lambda^{e_{i_0}}$. Therefore, we may choose $\lambda \in \bar{z}(m_1+q)\Lambda^{e_{i_0}}$ such that $$\lambda \neq \bar{z}(n_1+q,n_1+q+e_{i_0})$$ and $$\Lmin(\bar{z}(m_1+q,m_1+q+n_1),\lambda) \neq \emptyset.$$ Let $(\alpha,\beta) \in \Lmin(\bar{z}(m_1+q,m_1+q+n_1),\lambda)$ and set $$\xi = \bar{z}(0,m_1+q+n_1) \alpha.$$ 

Choose $w \in v_1 \Lambda^{\infty}$ such that $w(0,d(\xi)) = \xi$. Then we have
\begin{eqnarray*}
\sigma^{m_1} w (q,q+e_{i_0})  &=& \xi(m_1 + q,m_1 + q + e_{i_0}) = \lambda \\
\sigma^{n_1} w(q,q+e_{i_0}) &=& w(n_1+q,n_1+q+e_{i_0}) \\
 &=& \bar{z}(n_1+q,n_1+q+e_{i_0})
\end{eqnarray*}
 However, $\lambda$ is chosen such that $\lambda \neq \bar{z}(n_1+q,n_1+q+e_{i_0})$. Therefore, $\sigma^{m_1} w \neq \sigma^{n_1} w$, as required.
\end{proof}

\begin{prop} Let $\Lambda$ be a finitely-aligned $k$-graph without sources. The following are equivalent:
 \begin{enumerate}
  \item $\Lambda$ has no local periodicity.
  \item $\Lambda$ satisfies the aperiodicity condition.
 \end{enumerate}
\end{prop}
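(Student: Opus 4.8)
The plan is to prove the two implications separately, with $(2)\Rightarrow(1)$ essentially immediate and $(1)\Rightarrow(2)$ carried out by the Baire category argument of \cite[Lemma 3.3]{robsims20071}, which the previous proposition makes available in the present generality. For $(2)\Rightarrow(1)$, fix $v\in\Lambda^0$ and let $x\in v\partial\Lambda$ be a boundary path with $\sigma^m x=\sigma^n x\Rightarrow m=n$ for all $m,n\leq d(x)$. Given $m\neq n\in\NN^k$: if $d(x)\ngeq m\vee n$ then $x$ already witnesses NLP at $v$ for $(m,n)$; and if $d(x)\geq m\vee n$ then $m,n\leq d(x)$, so the aperiodicity of $x$ together with $m\neq n$ forces $\sigma^m x\neq\sigma^n x$, and $x$ is again the required witness. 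So this direction needs no real work.

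For $(1)\Rightarrow(2)$, the previous proposition replaces NLP by SNLP, which is exactly the hypothesis the Robertson--Sims argument uses. Since $\Lambda$ has no sources we have $\Lambda^{\leq\infty}=\Lambda^\infty$, and for each $v$ the set $v\Lambda^\infty$ is a nonempty Baire space in the cylinder topology, with basic open sets the cylinders $Z(\mu)=\{x\in v\Lambda^\infty : x(0,d(\mu))=\mu\}$ for $\mu\in v\Lambda$ (all nonempty, again because $\Lambda$ has no sources). Fix $v$, and for each $m\neq n\in\NN^k$ put $A_{m,n}=\{x\in v\Lambda^\infty:\sigma^m x=\sigma^n x\}$. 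Each $A_{m,n}$ is closed, being cut out by the clopen conditions $x(m,m+p)=x(n,n+p)$, $p\in\NN^k$. To show $A_{m,n}$ is nowhere dense it is enough, since it is closed and the cylinders form a basis, to find a point outside $A_{m,n}$ in every nonempty cylinder $Z(\mu)$. Given such a $\mu$, SNLP at $s(\mu)$ produces $z\in s(\mu)\Lambda^\infty$ with $\sigma^m z\neq\sigma^n z$; set $x=\mu z\in Z(\mu)$. Using $\sigma^{d(\mu)}(\mu z)=z$ we get $\sigma^{d(\mu)+m}x=\sigma^m z\neq\sigma^n z=\sigma^{d(\mu)+n}x$, and if $\sigma^m x=\sigma^n x$ held, applying $\sigma^{d(\mu)}$ to the common value would identify $\sigma^{d(\mu)+m}x$ with $\sigma^{d(\mu)+n}x$, a contradiction; hence $x\notin A_{m,n}$. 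Therefore $\bigcup_{m\neq n}A_{m,n}$ is meager in the Baire space $v\Lambda^\infty$, and any $x$ in its (nonempty) complement lies in $v\partial\Lambda$ and satisfies $\sigma^m x=\sigma^n x\Rightarrow m=n$ for all $m,n\leq d(x)=\infty$; that is the aperiodicity condition at $v$.

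The step I expect to need the most care is the one flagged in the second bullet of the remarks: SNLP is phrased on $\partial\Lambda$ and a priori allows a witnessing path whose degree has finite components, whereas the Baire argument runs inside $v\Lambda^\infty$. So one must verify that, with no sources, any SNLP witness may be taken in $v\Lambda^\infty$: if $x\in v\partial\Lambda$ has $d(x)\geq m\vee n$ and $\sigma^m x\neq\sigma^n x$, the two shifts already disagree on a finite initial segment of $x$, which can be prolonged to an infinite path (possible since $\Lambda$ has no sources) still witnessing the disagreement. The remaining ingredients — that $\partial\Lambda$ is locally compact Hausdorff with the stated cylinder basis, that $v\Lambda^\infty$ is Baire and nonempty, and that $\mu z\in\partial\Lambda$ whenever $z\in\partial\Lambda$ and $s(\mu)=r(z)$ — are standard and recorded in \cite{fmy} and Section~2; with these in hand the argument is that of \cite[Lemma 3.3]{robsims20071} verbatim.
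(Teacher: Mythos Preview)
Your proposal is correct and follows the same route as the paper: both directions are handled exactly as you outline, with $(2)\Rightarrow(1)$ immediate and $(1)\Rightarrow(2)$ obtained by first invoking the preceding proposition to upgrade NLP to SNLP and then running the argument of \cite[Lemma~3.3]{robsims20071}. The paper records this in one line, citing that lemma verbatim and noting (as you do) that the SNLP witness may be taken in $v\Lambda^\infty$; you spell the argument out as a Baire category argument on $v\Lambda^\infty$, which is a standard repackaging of the explicit inductive construction in \cite{robsims20071}, so there is no substantive difference in strategy.
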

\begin{proof} It is clear that the aperiodicity condition implies no local periodicity. Assume that $\Lambda$ satisfies NLP and fails the aperiodicity condition. The above work shows we may assume that for every $v \in \Lambda^0$ and $n \neq m \in \NN^k$, there is $x \in v \Lambda^{\infty}$ such that $d(x) \geq n \vee m$ and $\sigma^n x \neq \sigma^m x$. A proof identical to that of \cite[Lemma 3.3]{robsims20071} now shows that $\Lambda$ satisfies the aperiodicity condition.
\end{proof}

\subsection{Finitely-aligned, with sources}
This section is dedicated to proving the following proposition.

\begin{prop} Let $\Lambda$ be a finitely-aligned $k$-graph. Then $\Lambda$ satisfies the aperiodicity condition if and only if $\Lambda$ has no local periodicity.
\end{prop}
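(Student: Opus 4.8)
The plan is to dispatch the easy direction and reduce the converse to the no-sources case already established. The forward implication is immediate: if $x \in v\partial\Lambda$ witnesses the aperiodicity condition at $v$ and $m \neq n \in \NN^k$ satisfy $d(x) \geq m \vee n$, then $m, n \leq d(x)$, so $\sigma^m x = \sigma^n x$ would force $m = n$; hence $\sigma^m x \neq \sigma^n x$ and $x$ witnesses NLP at $v$. For the converse, assume $\Lambda$ has NLP and fix $v \in \Lambda^0$; we seek $x \in v\partial\Lambda$ with $\sigma^m x = \sigma^n x \Rightarrow m = n$ for all $m, n \leq d(x)$. The point to record first is that a shift identity can only occur in the ``infinite directions'' of a boundary path: if $x \in \partial\Lambda$ and $\sigma^m x = \sigma^n x$ with $m, n \leq d(x)$, then $\sigma^m x$ and $\sigma^n x$ have the same domain, so $d(x) - m = d(x) - n$ in $(\NN \cup \{\infty\})^k$ and therefore $m_i = n_i$ whenever $d(x)_i < \infty$. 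Writing $I(x) := \{ i : d(x)_i = \infty \}$, every periodicity pair of $x$ is supported on $I(x)$, so it suffices to produce $x \in v\partial\Lambda$ that is aperiodic in the directions $I(x)$.

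To obtain such an $x$ I would pass to the no-sources case through an auxiliary graph. Travelling out of $v$ along a boundary path, and using finite alignment together with the behaviour of finite exhaustive sets under $\textrm{Ext}$ (\cite[Lemma C.4]{finali}), one isolates a ``tail'' vertex $w$ reachable from $v$ in $\Lambda$ and a subset $I \subseteq \{ 1, \dots, k \}$ of directions that are \emph{persistently infinite} from $w$ on: informally, at $w$ and at every vertex reached from $w$ by a path of degree in $\NN^I$, the only available boundary behaviour lives in the $I$-directions. Setting $a := k - |I|$, one then builds a sourceless $(k-a)$-graph $\Gamma$ whose morphisms are essentially the paths $\mu \in \Lambda$ with $d(\mu) \in \NN^I$ running between such vertices, with degree functor the restriction of $d$, valued in $\NN^I \cong \NN^{k-a}$. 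The properties to verify are: $\Gamma$ inherits the unique factorization property from $\Lambda$, so it is a $(k-a)$-graph; $\Gamma$ is finitely aligned, since its minimal common extensions sit inside those of $\Lambda$; $\Gamma$ has no sources, which is precisely the meaning of ``persistently infinite''; and the boundary paths of $\Gamma$ are boundary paths of $\Lambda$, so that NLP of $\Lambda$, restricted to pairs in $\NN^I$, descends to NLP of $\Gamma$.

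Granting the construction, the conclusion follows quickly. By the no-sources case proved above, the finitely-aligned sourceless $(k-a)$-graph $\Gamma$ satisfies NLP, hence the aperiodicity condition, so there is $y \in w\partial\Gamma$ with $\sigma^m y = \sigma^n y \Rightarrow m = n$. Regarded as a path in $\Lambda$, $y$ lies in $w\partial\Lambda$ with $I(y) \subseteq I$; by the observation above, every periodicity pair of $y$ as a $\Lambda$-path lies in $\NN^{I(y)} \subseteq \NN^I$, where $y$ is aperiodic as a $\Gamma$-path, so $y$ is aperiodic for $\Lambda$. Finally, choosing a connecting path $\lambda \in v\Lambda w$, the path $\lambda y$ lies in $v\partial\Lambda$; any periodicity pair of $\lambda y$ is supported on $I(\lambda y) = I(y)$, and shifting both legs past $d(\lambda)$ turns it into a periodicity pair of $y$, which is impossible. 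Thus $\lambda y$ is the boundary path required at $v$, and $\Lambda$ satisfies the aperiodicity condition.

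The main obstacle is the middle step: pinning down the correct notion of a ``persistently infinite direction'' — equivalently, which vertices and paths of $\Lambda$ should populate $\Gamma$ — so that $\Gamma$ is genuinely sourceless and so that boundary paths, and with them NLP, transfer cleanly between $\Gamma$ and $\Lambda$. This interaction between maximal infinite support and heredity is exactly the point that earlier treatments finessed using local convexity, and handling it in the general finitely-aligned setting is the technical core of this subsection.
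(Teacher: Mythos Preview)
Your overall strategy matches the paper's exactly: reduce to the no-sources case by passing to an auxiliary sourceless $(k-a)$-graph $\Gamma$ built from the ``infinite directions'' at a suitably chosen tail vertex, transfer NLP down to $\Gamma$, invoke the no-sources result there, and transfer aperiodicity back up. The paper frames this by contradiction (assume NLP together with failure of aperiodicity at $v_1$, then show $\Gamma$ simultaneously has NLP and fails aperiodicity, contradicting the no-sources case) rather than by direct construction, but that is cosmetic; your ``shift both legs past $d(\lambda)$'' is precisely the paper's construction of the shift $p$ in its Claim~5.

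The step you flag as the main obstacle is indeed the only missing ingredient, and the paper's resolution is more elementary than your sketch suggests --- no appeal to $\textrm{Ext}$ or finite-exhaustive machinery is needed here. One simply iterates along paths in $\Lambda^{\leq\infty}$. Starting at $v_1$, if some $x_1 \in v_1\Lambda^{\leq\infty}$ has $d(x_1)_{i_1} < \infty$, pick $t_1$ with $x_1(t_1)\Lambda^{e_{i_1}} = \emptyset$ and set $v_2 = x_1(t_1)$; then every $y \in v_2\Lambda^{\leq\infty}$ has $d(y)_{i_1} = 0$. Repeating, after at most $k$ steps one reaches $v_a$ at which every $x \in v_a\Lambda^{\leq\infty}$ has $d(x)_i \in \{0,\infty\}$ with the value depending only on $i$. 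The paper then takes $\Gamma$ to have objects $\{w : v_a\Lambda w \neq \emptyset\}$ and morphisms $\{\lambda : v_a\Lambda r(\lambda) \neq \emptyset\}$, with degree $d' = \pi \circ d$ the projection onto the surviving coordinates. Since no vertex reachable from $v_a$ emits an edge in a dead direction, every such $\lambda$ automatically has $d(\lambda) \in \NN^I$, $\Gamma$ is a sourceless finitely-aligned $(k-a)$-graph, and its boundary paths at each $w \in \Gamma^0$ coincide with those of $\Lambda$. With this construction in hand, your final two paragraphs (or equivalently the paper's Claims~4 and~5) go through.
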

\begin{proof}
It is clear that the aperiodicity condition implies no local periodicity. Suppose that $\Lambda$ has no local periodicity but fails the aperiodicity condition at some $v_1 \in \Lambda^0$. 

Assume there exists $x_1 \in v_1 \Lambda^{\leq \infty}$ such that $d(x_1)_{i_1} < \infty$ for some $i_1 \in \{1,\ldots,k\}$. If no such $x_1 \in v_1 \Lambda^{\leq \infty}$ exists, then $v_1 \Lambda^{\leq \infty} = v_1 \Lambda^{\infty}$. Fix $t_1 \in \NN^k$ such that $x_1(t_1)\Lambda^{e_{i_1}} = \emptyset$. Set $v_2 = x_1(t_1)$ and note that $d(y)_{i_1} = 0$ for every $y \in v_2 \Lambda^{\leq \infty}$. 

Suppose there is $x_2 \in v_2 \Lambda^{\leq \infty}$ such that $0 < d(x_2)_{i_2} < \infty$ for some $i_2 \in \{1,\ldots,k\}$. Then $i_1 \neq i_2$ and we may find $t_2 \in \NN^k$ such that $x_2(t_2)\Lambda^{e_{i_2}} = \emptyset$. Set $v_3 = x_2(t_2)$.

We may continue in this fashion to find $v_a \in \Lambda^0$ and an arrangement $\{i_1,\ldots,i_a,i_{a+1},\ldots,i_k\}$ of $\{1,\ldots,k\}$ such that, for every $x \in v_a \Lambda^{\leq \infty}$, 
\[
 d(x)_i = \begin{cases}
  0 & \text{if } i=i_j, \ j \leq a \\
  \infty & \text{if } i=i_j, \ a+1 \leq j \leq k \\
   \end{cases}
\]

Define a category $\Gamma$ by setting:
\begin{eqnarray*}
 \Obj(\Gamma)&=&\{ w \in \Lambda \ | \ v_a \Lambda w \neq \emptyset\} \\ 
  \Hom(\Gamma)&=&\{\lambda \in \Lambda \ | \ v_a \Lambda r(\lambda) \neq \emptyset \}
\end{eqnarray*}
 Define a degree functor $d': \Gamma \rightarrow \NN^{k-a}$ by $$d'(\lambda) = \pi (d(\lambda)),$$  where $\pi : \NN^k \rightarrow \NN^{k-a}$ by $$\pi\left(\sum_{i=1}^k a_i e_i \right) = \sum_{j=a+1}^k a_{i_j} e_{i_j}.$$

\begin{claim} $\Gamma$ is a finitely-aligned $(k-a)$-graph without sources.\end{claim}
\begin{proof}
 It is clear that $\Gamma$ is a category, with range and source maps coming from $\Lambda$. It must be checked that $d'$ is a well-defined functor satisfying unique factorization.

That $d'$ is a well-defined functor follows immediately from its definition. To see that $d'$ satisfies unique factorization, let $\lambda \in \Gamma$ and suppose $d'(\lambda) = m'+n'$, where $m',n' \in \NN^{k-a}$. Set $m = \iota(m')$ and $n=\iota(n')$, where $\iota: \NN^{k-a} \rightarrow \NN^k$ is standard injection. Note that $d(\lambda) = m + n$, since otherwise $d(\lambda)_{i_j} > 0$ for some $j \in \{1,\ldots,a\}$, a contradiction of the fact that $v_a \Lambda^{e_{i_j}} = \emptyset$ for $j \in \{1,\ldots,a\}$. Thus, there are $\mu,\nu \in \Lambda$ such that $\lambda = \mu \nu$, $d(\mu)=m$, and $d(\nu)=n$. It is clear that $d'(\mu)=m'$ and $d'(\nu) = n'$, so $d'$ satisfies unique factorization.

$\Gamma$ is finitely-aligned because $|\Gamma^{\textrm{min}}(\lambda,\mu)| = \infty$ readily implies that $\Lmin(\lambda,\mu)$ is infinite.

Finally, suppose that $w \in \Gamma^0$ is such that $w \Gamma^{e_{i_j}} = \emptyset$ for some $j \in \{a+1,\ldots,k\}$. Fix $\lambda \in v_a \Lambda w \neq \emptyset$ and choose $x \in \Lambda^{\leq \infty}$ such that $x(0,d(\lambda)) = \lambda$. Then $d(x)_{i_j} = 0$, a contradiction. Therefore, $\Gamma$ has no sources
\end{proof}

\begin{claim} $\Gamma$ has NLP.\end{claim}
\begin{proof} Fix $w \in \Gamma^0$ and $m' \neq n' \in \NN^{k-a}$. Let $m = \iota(m'), n=\iota(n')$, where $\iota:\NN^{k-a} \rightarrow \NN^k$ is standard injection. Because $\Lambda$ is assumed to satisfy NLP, there is $x \in w \partial \Lambda$ such that $d(x) \ngeq m \vee n$ or $\sigma^m x \neq \sigma^n x$.

Suppose that $d(x) \ngeq m \vee n$ for some $x \in w \partial \Lambda$. Then $d(x)_i < (m \vee n)_i$ for some $i \in \{1,\ldots,k\}$. Since $(m \vee n)_i = 0$ for $i \in \{i_1,\ldots,i_a\}$, this implies that $d(x)_{i_j} < (m \vee n)_{i_j}$ for some $j \in \{a+1,\ldots,k\}$.  Define $y \in w \partial \Gamma$ by $y(0,l) = x(0,\iota(l))$. Then $d'(y)_{i_j} < (m' \vee n')_{i_j}$, so that $d'(y) \ngeq m' \vee n'$.

Suppose that $\sigma^n x \neq \sigma^m x$ for some $x \in w \partial \Lambda$. Define $y \in w \partial \Gamma$ by $y(0,l) = x(0,\iota(l))$. Note that $d(x)_i = 0$ for $i \in \{i_1,\ldots,i_a\}$. It follows immediately that $\sigma^{n'} y \neq \sigma^{m'} y$.

\end{proof}

\begin{claim} $\Gamma$ fails the aperiodicity condition.\end{claim}
\begin{proof}
 It is assumed that $\Lambda$ fails the aperiodicity condition at $v_1 \in \Lambda^0$. Let $y \in v_a \partial \Gamma$. We will find $n^{\prime} \neq m^{\prime} \in \NN^{k-a}$ such that $\sigma^{n^\prime} y = \sigma^{m^\prime} y$.

For $t \in \NN^k$ define $x \in v_a \partial \Lambda$ by $x(0,t)=y(0,\pi(t))$ and fix $\mu \in v_1 \Lambda v_a$ (using the fact that $v_1 \Lambda v_a \neq \emptyset$ by construction of $v_a$). Since $\Lambda$ fails the aperiodicity condition at $v_1 \in \Lambda^0$, there are $n \neq m \in \NN^k$ such that $\sigma^n (\mu x) = \sigma^m (\mu x)$. Notice that $d(x)_i=0$ when $i \in \{i_1,\ldots,i_a\}$ and that $d(x)_i=\infty$ whenever $m_i \neq n_i$. Thus, $n_i \neq m_i$ for some $i \in \{i_{a+1},\ldots,i_k\}$.

Define $p \in \NN^k$ by 

\[
 p_i = \begin{cases}
  d(\sigma^n(\mu x))_i & \text{if } i=i_j, \ j \leq a \\
  d(\mu)_i & \text{if } i=i_j, \ a+1 \leq j \leq k \\
   \end{cases}
\]

Then $p \leq d(\sigma^n(\mu x))$, $p + n \geq d(\mu)$, and 
$$\sigma^{n \vee m - n} \sigma^{n+p}(\mu x) = \sigma^{n \vee m - m} \sigma^{n+p}(\mu x).$$ Let $q = n+p - d(\mu)$. Then $$(n \vee m - n + q) \vee (n \vee m - m + q) \leq d(x)$$ because $((n \vee m - n ) \vee (n \vee m - m ))_i > 0$ implies $d(x)_i = \infty$ and $(n \vee m - n + q) \vee (n \vee m - m + q)_i = q_i \leq d(x)$ otherwise. Moreover,

\begin{eqnarray*}
 \sigma^{n \vee m - n + q} x &=& \sigma^{n \vee m + p} (\mu x) \\
			     &=& \sigma^{n \vee m - n} \sigma^{n+p} (\mu x) \\
	                     &=& \sigma^{n \vee m - m} \sigma^{n+p}(\mu x) \\
                             &=& \sigma^{n \vee m - m + q} x. \\
\end{eqnarray*}


Set $n^{\prime} = \pi(n \vee m - m + q)$ and $m^{\prime}= \pi(n \vee m - n + q)$. Notice that $n^{\prime} \neq m^{\prime}$, since otherwise $(n \vee m - n)_{i_j} = (n \vee m - m)_{i_j}$ for each $j \in \{a+1,\ldots,k\}$. Finally, the above work shows that $\sigma^{n^\prime} y = \sigma^{m^\prime} y$. Therefore, $\Gamma$ fails the aperiodicity condition at $v_a \in \Gamma^0$.
\end{proof}

Claims 4 and 5 yield a contradiction since it is shown above that, for finitely-aligned graphs without sources, NLP is equivalent to the aperiodicity condition. Therefore, NLP implies Condition A.
\end{proof}
\subsection{Equivalent conditions }

The following lemma (and its proof) is identical to \cite[Lemma 3.4]{robsims20071}.
\begin{lem} \label{techlemma}
 Suppose $\Lambda$ has local periodicity $n,m$ at $v \in \Lambda^0$. Then $d(x) \geq n \vee m$ and $\sigma^n x = \sigma^m x$ for every $x \in v \partial \Lambda$. Fix $x \in v \partial \Lambda$ and set $\mu = x(0,m), \alpha = x(m,m \vee n)$, and $\nu=\mu \alpha(0,n)$. Then $\mu \alpha y = \nu \alpha y$ for every $y \in s(\alpha) \partial \Lambda$.
\end{lem}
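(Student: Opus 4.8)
The plan is to treat the two assertions in order, the first being essentially a matter of unwinding definitions and the second a short computation inside $\partial\Lambda$.

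For the first assertion, recall that saying $\Lambda$ has local periodicity $n,m$ at $v$ means precisely that NLP fails at $v$ with this particular pair $n\ne m$; negating the defining clause of NLP, this says that \emph{no} $x\in v\partial\Lambda$ satisfies $d(x)\ngeq n\vee m$ or $\sigma^n x\ne\sigma^m x$. Hence $d(x)\ge n\vee m$ and $\sigma^n x=\sigma^m x$ for every $x\in v\partial\Lambda$, which is exactly the first assertion. I would also record here the implicit consequence that $\sigma^n x$ and $\sigma^m x$ are morphisms with domains $\Omega_{k,d(x)-n}$ and $\Omega_{k,d(x)-m}$, so that the equality $\sigma^n x=\sigma^m x$ forces $d(x)_i=\infty$ whenever $n_i\ne m_i$; this is the fact that makes the degree bookkeeping in the second part consistent. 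In particular, taking $x$ in the second assertion, the segments $\mu=x(0,m)$ and $\alpha=x(m,m\vee n)$ are well defined, $d(\mu)=m$, $d(\alpha)=(m\vee n)-m$, and $\nu=(\mu\alpha)(0,n)=x(0,n)$ has $d(\nu)=n$. Note $r(\alpha)=s(\mu)=x(m)$, $s(\alpha)=x(m\vee n)$, and since $\sigma^m x=\sigma^n x$ have common range $x(m)=x(n)$ we get $s(\nu)=x(n)=r(\alpha)$, so $\nu\alpha$ is a legitimate path.

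Now fix $y\in s(\alpha)\partial\Lambda$, so $r(y)=s(\alpha)=x(m\vee n)$, and set $z:=\mu\alpha y$. Since $\mu\alpha\in\Lambda r(y)$, the construction of $\lambda z$ recalled in the preliminaries gives $z\in\partial\Lambda$, with $r(z)=r(\mu)=v$, so $z\in v\partial\Lambda$. Applying the first assertion to $z$ yields $\sigma^m z=\sigma^n z$. On one hand $z=\mu\cdot(\alpha y)$ with $d(\mu)=m$, so $\sigma^m z=\alpha y$. On the other hand $d(z)\ge d(\mu\alpha)=m\vee n\ge n$ and $z(0,m\vee n)=\mu\alpha$, whence $z(0,n)=(\mu\alpha)(0,n)=\nu$ and therefore $z=\nu\cdot\sigma^n z$. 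Combining, $\mu\alpha y=z=\nu\cdot\sigma^n z=\nu\cdot\sigma^m z=\nu\alpha y$, which is the claim.

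I do not expect a genuine obstacle here; the lemma is short once the definitional content of local periodicity is made explicit. The only points requiring a little care are the well-definedness checks: that $\nu\alpha$ composes (from $x(m)=x(n)$), that $(\mu\alpha)y=\mu(\alpha y)$ (associativity of the $\lambda\cdot(-)$ action on $\partial\Lambda$, immediate from the uniqueness clause defining it), and that the degrees of the two sides of $\mu\alpha y=\nu\alpha y$ actually agree — which holds because $d(y)_i=\infty$ for each $i$ with $n_i\ne m_i$, forced by applying the first assertion to $z$. The proof is, as the statement announces, a direct transcription of \cite[Lemma 3.4]{robsims20071}.
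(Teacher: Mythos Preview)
Your proof is correct and follows essentially the same route as the paper's: set $z=\mu\alpha y$, apply the local periodicity hypothesis at $v$ to obtain $\sigma^m z=\sigma^n z$, observe $\sigma^m z=\alpha y$ and $z(0,n)=\nu$, and conclude $\mu\alpha y=\nu\sigma^n z=\nu\alpha y$. The paper's version is terser, omitting the well-definedness checks (that $\nu\alpha$ composes, that degrees match) which you spell out; these are welcome additions but do not constitute a different approach.
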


\begin{proof}
 Let $y \in s(\alpha) \partial \Lambda$ and set $w=\mu \alpha y$. Then we have $d(w) \geq n \vee m$ and $\sigma^n w = \sigma^m w$ by assumption. Moreover, $w(0,n) = \nu$, so $w = \nu \sigma^n w$. Since $\sigma^m w = \sigma^n w$, it follows that $\sigma^n w = \alpha y$, so $\mu \alpha y = w  = \nu \alpha y$.
\end{proof}

\begin{defn} Let $\Lambda$ be a finitely-aligned $k$-graph. $\Lambda$ satisfies Condition B if for each $v \in \Lambda^0$, there is $x \in v \partial \Lambda$ such that $\lambda \neq \mu \in \Lambda v$ implies $\lambda x \neq \mu x$. \cite{fmy}
 \end{defn}

\begin{prop} Let $\Lambda$ be a finitely-aligned $k$-graph. The following are equivalent:
\begin{enumerate}
   \item $\Lambda$ has no local periodicity.
   \item $\Lambda$ satisfies the aperiodicity condition.
   \item $\Lambda$ satisfies Condition B.
\end{enumerate} 
\end{prop}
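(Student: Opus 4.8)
The plan is to build on the preceding proposition, which already gives $(1)\Leftrightarrow(2)$ for arbitrary finitely-aligned $\Lambda$, and to close the loop by proving $(2)\Rightarrow(3)$ and $(3)\Rightarrow(1)$. No further boundary-path gymnastics should be needed: the substantive work has been absorbed into that proposition and into Lemma \ref{techlemma}.

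For $(2)\Rightarrow(3)$: fix $v\in\Lambda^0$ and let $x\in v\partial\Lambda$ be a path witnessing the aperiodicity condition at $v$. I would show the same $x$ witnesses Condition B at $v$. If not, there are distinct $\lambda,\mu\in\Lambda v$ with $\lambda x=\mu x$. Both $\lambda=(\lambda x)(0,d(\lambda))$ and $\mu=(\lambda x)(0,d(\mu))$ are initial segments of $\lambda x$, so $p:=d(\lambda)\vee d(\mu)\leq d(\lambda x)$, and hence $p-d(\lambda)\leq d(x)$ and $p-d(\mu)\leq d(x)$. Since $\sigma^{d(\lambda)}(\lambda x)=x$, composing shifts gives $\sigma^{p}(\lambda x)=\sigma^{p-d(\lambda)}x$ and likewise $\sigma^{p}(\mu x)=\sigma^{p-d(\mu)}x$; as $\lambda x=\mu x$ we get $\sigma^{p-d(\lambda)}x=\sigma^{p-d(\mu)}x$. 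The choice of $x$ forces $p-d(\lambda)=p-d(\mu)$, so $d(\lambda)=d(\mu)$, and then $\lambda=(\lambda x)(0,d(\lambda))=(\mu x)(0,d(\mu))=\mu$, a contradiction.

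For $(3)\Rightarrow(1)$ I would argue the contrapositive, which is essentially a repackaging of Lemma \ref{techlemma}. If $\Lambda$ fails NLP, it has local periodicity $n,m$ at some $v\in\Lambda^0$ with $n\neq m$; by Lemma \ref{techlemma} every $x\in v\partial\Lambda$ then satisfies $d(x)\geq n\vee m$ and $\sigma^n x=\sigma^m x$. Fix one such $x$ and set $\eta=x(m,m\vee n)$, $\zeta=x(n,m\vee n)$, and $u=x(m\vee n)$. Then $s(\eta)=s(\zeta)=u$, while $r(\eta)=x(m)=\sigma^m x(0)=\sigma^n x(0)=x(n)=r(\zeta)$, and $d(\eta)=m\vee n-m\neq m\vee n-n=d(\zeta)$, so $\eta\neq\zeta$. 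For any $z\in u\partial\Lambda$ the path $w=x(0,m\vee n)z$ lies in $v\partial\Lambda$, and since $x(0,m\vee n)=x(0,m)\eta=x(0,n)\zeta$ we get $\eta z=\sigma^m w=\sigma^n w=\zeta z$. Thus $\eta,\zeta$ are distinct elements of $\Lambda u$ with $\eta z=\zeta z$ for every $z\in u\partial\Lambda$, so no $z\in u\partial\Lambda$ witnesses Condition B at $u$; hence $\Lambda$ fails Condition B.

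Assembling the implications, $(1)\Leftrightarrow(2)$ is the preceding proposition and $(2)\Rightarrow(3)\Rightarrow(1)$ by the two arguments above, so the three conditions are equivalent. The one point I would handle carefully is the shift bookkeeping in $(2)\Rightarrow(3)$: because boundary paths may have finite coordinates in their degree, one must verify that every shift $\sigma^{p-d(\lambda)}x$, $\sigma^{p-d(\mu)}x$ invoked is actually defined, and this is exactly what the estimate $p\leq d(\lambda x)$ secures. Beyond that, I expect no real obstacle, since the genuine content of the result lies in Lemma \ref{techlemma} and in the reduction of the with-sources case carried out earlier.
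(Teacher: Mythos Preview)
Your proof is correct and follows essentially the same approach as the paper: the paper also reduces to $(1)\Leftrightarrow(2)$ from the preceding proposition, proves $(2)\Rightarrow(3)$ by the same shift computation (phrased contrapositively there), and proves $(3)\Rightarrow(1)$ via Lemma~\ref{techlemma} by exhibiting two distinct paths into $s(\alpha)=x(m\vee n)$ that agree on every boundary extension. The only cosmetic difference is that the paper uses the pair $\mu\alpha,\nu\alpha$ from Lemma~\ref{techlemma} rather than your $\eta=x(m,m\vee n),\ \zeta=x(n,m\vee n)$, but both choices witness the failure of Condition~B at the same vertex.
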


\begin{proof}
 The above work shows that (1) is equivalent to (2).

(3) $\Rightarrow$ (1). Suppose $\Lambda$ has local periodicity $n,m$ at $v \in \Lambda^0$. Choose $\mu, \nu, \alpha$ as in Lemma \ref{techlemma} and note $d(\mu \alpha) = m \vee n$, $d(\nu \alpha) = n+m \vee n - m$, and that $n+m \vee n -m \neq m \vee n$ if $m \neq n$. Thus, $\mu \alpha \neq \nu \alpha$ and $\mu \alpha y = \nu \alpha y$ for each $y \in s(\alpha) \partial \Lambda$. Therefore, $\Lambda$ fails Condition B at $s(\alpha)$.

(2) $\Rightarrow$ (3). Suppose that $\Lambda$ fails Condition B at $v \in \Lambda^0$. Then for each $x \in v \partial \Lambda$, there are $\lambda_x \neq \mu_x \in \Lambda v$ such that $\lambda_x x = \mu_x x$. Notice $d(\lambda_x) \neq d(\mu_x)$, since then $\lambda_x = (\lambda_x x)(0,d(\lambda_x)) = (\mu_x x)(0,d(\mu_x)) = \mu_x$. 

If $d(\lambda_x)_i \neq d(\mu_x)_i$ for some $i \in \{1,\ldots,k\}$, then $d(x)_i + d(\lambda_x)_i = d(x)_i + d(\mu_x)_i$ implies $d(x)_i = \infty$. Hence, $$(d(\lambda_x) \vee d(\mu_x) - d(\mu_x)) \vee (d(\lambda_x) \vee d(\mu_x) - d(\lambda_x)) \leq d(x).$$ Therefore, 
$$\sigma^{d(\lambda_x) \vee d(\mu_x) - d(\mu_x)} x = \sigma^{d(\lambda_x) \vee d(\mu_x) - d(\mu_x)} \sigma^{d(\mu_x)} (\mu x) = \sigma^{d(\lambda_x) \vee d(\mu_x)} \mu x.$$

Similarly, $$\sigma^{d(\lambda_x) \vee d(\mu_x) - d(\lambda_x)} x = \sigma^{d(\lambda_x) \vee d(\mu_x)} \lambda x.$$

Since we have $\lambda x = \mu x$, this yields $$\sigma^{d(\lambda_x) \vee d(\mu_x) - d(\mu_x)} x = \sigma^{d(\lambda_x) \vee d(\mu_x) - d(\lambda_x)} x.$$ Hence, $\Lambda$ fails the aperiodicity condition at $v \in \Lambda^0$.
\end{proof}

\section{Main Result}
\begin{defn} Let $\Lambda$ be a finitely-aligned $k$-graph and let $H \subseteq \Lambda^0$. $H$ is \emph{hereditary} if, for all $\lambda \in \Lambda$, $r(\lambda) \in H$ implies $s(\lambda) \in H$. $H$ is \emph{saturated} if for all $v \in \Lambda^0$, $F \in v \FE(\Lambda)$ and $s(F) \subseteq H$ imply $v \in H$.
 
\end{defn}

\begin{defn} Let $\Lambda$ be a $k$-graph. $\Lambda$ is \emph{cofinal} if, for every $v \in \Lambda^0$, there is $x \in \partial \Lambda$ and $n \leq d(x)$ such that $v \Lambda x(n) \neq \emptyset$.
 
\end{defn}

\begin{prop} \label{cofinalprop} Let $\Lambda$ be a finitely-aligned $k$-graph. The following are equivalent.
 \begin{enumerate}
  \item $\Lambda$ is cofinal.
  \item If $I$ is an ideal of $\Cstarl$ and $s_v \in I$ for some $v \in \Lambda^0$, then $I=\Cstarl$.
 \end{enumerate}
\end{prop}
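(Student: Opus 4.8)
The plan is to translate statement (2) into a statement about hereditary saturated subsets of $\Lambda^0$ and match it with cofinality. For an ideal $I$ of $\Cstarl$, put $H_I=\{w\in\Lambda^0:s_w\in I\}$. First I would check, using the Cuntz--Krieger relations, that $H_I$ is always hereditary and saturated: if $s_{r(\lambda)}\in I$ then $s_\lambda=s_{r(\lambda)}s_\lambda\in I$ and hence $s_{s(\lambda)}=s_\lambda^\ast s_\lambda\in I$ (since $\Lmin(\lambda,\lambda)=\{(s(\lambda),s(\lambda))\}$), giving hereditariness; and if $F\in v\FE(\Lambda)$ with $s(F)\subseteq H_I$ then each $s_\lambda s_\lambda^\ast\in I$, so expanding $\prod_{\lambda\in F}(s_v-s_\lambda s_\lambda^\ast)=0$ shows $s_v\in I$, giving saturation. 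I would also record the book-keeping fact that $H_I=\Lambda^0$ forces $I=\Cstarl$, since then $s_\lambda s_\mu^\ast=s_{r(\lambda)}s_\lambda s_\mu^\ast\in I$ for all $\lambda,\mu$, and such elements span a dense subspace of $\Cstarl$.

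The technical heart of both directions is the following relative boundary-path lemma: \emph{if $H\subseteq\Lambda^0$ is hereditary and saturated and $w\in\Lambda^0\setminus H$, then there is $x\in w\partial\Lambda$ with $x(n)\notin H$ for every $n\le d(x)$.} I would prove this by re-running the inductive construction of a boundary path out of $w$ (cf.\ the standard proof that $w\partial\Lambda\neq\emptyset$, \cite{fmy}): whenever the construction is forced to extend the finite path built so far --- which ends at some vertex $u\notin H$ --- by an element of a finite exhaustive set $E\subseteq u\Lambda$, saturation of $H$ furnishes $\lambda\in E$ with $s(\lambda)\notin H$, and one extends by such a $\lambda$; since $H$ is hereditary and $s(\lambda)\notin H$, every vertex on $\lambda$ lies outside $H$, so the resulting path never enters $H$. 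Carrying the degree book-keeping of that construction while maintaining the side condition ``stays outside $H$'' is the delicate point, and is where I expect the real work to lie. Granting the lemma, the implication (1) $\Rightarrow$ (2) is short: if $s_v\in I$ then $H_I$ is hereditary, saturated and contains $v$, hence nonempty; if $H_I\neq\Lambda^0$, choose $w\notin H_I$, take $x\in w\partial\Lambda$ avoiding $H_I$, and apply cofinality to $v$ and $x$ to get $n\le d(x)$ and $\eta\in v\Lambda x(n)$; then $r(\eta)=v\in H_I$ and hereditariness force $x(n)=s(\eta)\in H_I$, a contradiction. So $H_I=\Lambda^0$ and $I=\Cstarl$.

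For (2) $\Rightarrow$ (1) I would argue by contraposition. If $\Lambda$ is not cofinal, fix $v_0\in\Lambda^0$ and $x_0\in\partial\Lambda$ with $v_0\Lambda x_0(n)=\emptyset$ for all $n\le d(x_0)$, and set $H=\{w\in\Lambda^0:w\Lambda x_0(n)=\emptyset\text{ for all }n\le d(x_0)\}$. Hereditariness of $H$ is immediate from composability of paths. For saturation, suppose $F\in w\FE(\Lambda)$ with $s(F)\subseteq H$, and assume towards a contradiction that $\mu\in w\Lambda x_0(n)$ for some $n$; then $\textrm{Ext}(\mu;F)$ is finite exhaustive in $x_0(n)\Lambda$, so since $x_0\in\partial\Lambda$ there is $\alpha\in\textrm{Ext}(\mu;F)$ with $x_0(n,n+d(\alpha))=\alpha$, and unwinding the definition of $\textrm{Ext}$ produces $\lambda\in F$ with $s(\lambda)\Lambda x_0(n+d(\alpha))\neq\emptyset$, contradicting $s(\lambda)\in H$; hence $w\in H$. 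Thus $H$ is hereditary saturated with $v_0\in H\neq\emptyset$ and $x_0(0)\notin H\neq\Lambda^0$.

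To finish (2) $\Rightarrow$ (1), observe that the subspace $\ell^2(\{y\in\partial\Lambda:y(n)\notin H\text{ for all }n\le d(y)\})$ is reducing for the boundary-path representation (invariance under each $S_\lambda$ and each $S_\lambda^\ast$ follows because $H$ is hereditary, so that a path whose source lies outside $H$ has all vertices outside $H$) and is nonzero since it contains $e_{x_0}$. Restricting the representation to it yields a Cuntz--Krieger $\Lambda$-family, hence by universality a representation $\pi$ of $\Cstarl$ with $\pi(s_w)=0$ for all $w\in H$ but $\pi(s_{x_0(0)})\neq 0$. Then $\ker\pi$ is a proper ideal of $\Cstarl$ containing $s_{v_0}$, so (2) fails. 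The only genuinely hard step in this program is the relative boundary-path lemma of the second paragraph; all the remaining steps are routine manipulations with the Cuntz--Krieger relations, with cofinality, and with the $\textrm{Ext}$ machinery recorded in Section 2.
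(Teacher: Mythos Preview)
Your proposal is correct and follows the same overall strategy as the paper: reduce statement (2) to a statement about non-trivial hereditary saturated subsets of $\Lambda^0$, and link that to cofinality via a boundary path that avoids such a subset. The paper outsources most of the work you do by hand --- that $H_I$ is hereditary and saturated, the ``relative boundary-path lemma'', and the fact that $H_x$ is a non-trivial hereditary saturated set --- to results in \cite{sims20062} (specifically Lemma~3.3, Claim~8.6, and Proposition~8.5 there), whereas you sketch direct arguments for each. The one genuine variation is in $(2)\Rightarrow(1)$: the paper invokes the ideal $I_{H_x}$ associated to the saturated hereditary set $H_x$ and cites \cite{sims20062} for its non-triviality, while you exhibit a proper ideal containing $s_{v_0}$ as the kernel of the boundary-path representation restricted to the reducing subspace $\ell^2(\{y\in\partial\Lambda:y(n)\notin H\text{ for all }n\})$. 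Your route is more self-contained and avoids appealing to the ideal-structure theory of \cite{sims20062}; the paper's route is shorter but leans on that machinery.
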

\begin{proof}
    (1) $\Rightarrow$ (2). Suppose that $\Lambda$ is cofinal and let $H \subseteq \Lambda^0$ be a non-empty, saturated, and hereditary set. Suppose that $H \neq \Lambda^0$. By \cite[Claim 8.6]{sims20062}, there is a path $x \in \partial \Lambda$ such that $x(n) \notin H$ for all $n \leq d(x)$. This, however, is a contradiction: Let $v \in H$. By the assumption that $\Lambda$ is cofinal, there is $n \leq d(x)$ for which $v \Lambda x(n) \neq \emptyset$. Let $\lambda \in v \Lambda x(n)$. Then $r(\lambda) \in H$ and hence $x(n) = s(\lambda) \in H$ by the assumption that $H$ is hereditary.

    Suppose that $I$ is an ideal of $\Cstarl$ and that $s_v \in I$ for some $v \in \Lambda^0$. Let $H_I = \{v \in \Lambda^0 : s_v \in I\}$. Then $H_I$ is non-empty and \cite[Lemma 3.3]{sims20062} shows that $H_I$ is a saturated and hereditary subset of $\Lambda^0$, whence $H_I = \Lambda^0$. This implies $s_v \in I$ for all $v \in \Lambda^0$, which yields $I = \Cstarl$. 

    (2) $\Rightarrow$ (1). Assume that $\Lambda$ is not cofinal. Then there is a vertex $v \in \Lambda$ and a path $x \in \partial \Lambda$ such that $v \Lambda x(n) = \emptyset$ for all $n \in \NN^k$ with $n \leq d(x)$. Let $$H_x = \{ w \in \Lambda^0 : w \Lambda x(n) = \emptyset \ \forall \ n \in \NN^k \ \textrm{such that} \ n \leq d(x) \}.$$ Then the proof of \cite[Proposition 8.5]{sims20062} shows that $H_x$ is a non-trivial saturated and hereditary set in $\Lambda^0$. Hence, $I_{H_x}$ is a non-trivial ideal of $\Cstarl$ containing a vertex projection.

\end{proof}

\begin{prop} \label{nlpprop} Let $\Lambda$ be a finitely-aligned $k$-graph. The following are equivalent.
 \begin{enumerate}
  \item $\Lambda$ has no local periodicity.
  \item Every non-zero ideal of $\Cstarl$ contains a vertex projection.
  \item The boundary-path representation $\pi_S$ is faithful.
 \end{enumerate}
\end{prop}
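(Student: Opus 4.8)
The plan is to prove the cycle of implications (1) $\Rightarrow$ (2) $\Rightarrow$ (3) $\Rightarrow$ (1). Two facts will be used throughout. First, by the preceding Proposition, no local periodicity coincides with the aperiodicity condition, so whenever $\Lambda$ has NLP I may invoke the Cuntz--Krieger uniqueness theorem of \cite{fmy}: if $\Lambda$ satisfies the aperiodicity condition, then any Cuntz--Krieger $\Lambda$-family $\{t_\lambda\}$ with $t_v \neq 0$ for all $v \in \Lambda^0$ induces a faithful representation of $\Cstarl$. Second, every vertex projection is non-zero in $\Cstarl$, since $\pi_S(s_v) = S_v \neq 0$ by the boundary-path Proposition.

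For (1) $\Rightarrow$ (2), let $I$ be a non-zero ideal of $\Cstarl$ and let $q \colon \Cstarl \to \Cstarl/I$ be the quotient map. Because $q$ is a $*$-homomorphism, $\{q(s_\lambda)\}$ satisfies the four Cuntz--Krieger relations and is therefore a Cuntz--Krieger $\Lambda$-family in $\Cstarl/I$. If $q(s_v) \neq 0$ for every $v \in \Lambda^0$, the uniqueness theorem makes $q$ injective, forcing $I = 0$ and contradicting the choice of $I$; hence $q(s_v) = 0$, i.e.\ $s_v \in I$, for some $v$. For (2) $\Rightarrow$ (3), observe that $\ker \pi_S$ is an ideal of $\Cstarl$; if it were non-zero, then by (2) it would contain some $s_v$, giving $S_v = \pi_S(s_v) = 0$ and contradicting $S_v \neq 0$. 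So $\ker \pi_S = 0$ and $\pi_S$ is faithful.

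For (3) $\Rightarrow$ (1) I argue by contraposition. Suppose $\Lambda$ has local periodicity $n,m$ at some $v \in \Lambda^0$. Fix $x \in v\partial\Lambda$ and take $\mu, \alpha, \nu$ as in Lemma~\ref{techlemma}, so that $\mu\alpha y = \nu\alpha y$ for every $y \in s(\alpha)\partial\Lambda$; consequently $S_{\mu\alpha} = S_{\nu\alpha}$, that is, $s_{\mu\alpha} - s_{\nu\alpha} \in \ker\pi_S$. The main obstacle is to check that this element is actually non-zero — it would be a mistake to take this for granted, since $S_{\mu\alpha}$ and $S_{\nu\alpha}$ are literally equal. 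To see it, note that $n \neq m$ gives $d(\mu\alpha) = m \vee n \neq n + (m\vee n) - m = d(\nu\alpha)$; if $s_{\mu\alpha} = s_{\nu\alpha}$, then applying the gauge action $\gamma \colon \mathbb{T}^k \to \operatorname{Aut}(\Cstarl)$ yields $z^{d(\mu\alpha)} s_{\mu\alpha} = z^{d(\nu\alpha)} s_{\mu\alpha}$ for all $z \in \mathbb{T}^k$, and choosing $z$ with $z^{d(\mu\alpha) - d(\nu\alpha)} \neq 1$ forces $s_{\mu\alpha} = 0$, hence $s_{s(\alpha)} = s_{\mu\alpha}^* s_{\mu\alpha} = 0$, contradicting the non-vanishing of vertex projections. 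Therefore $\ker\pi_S \neq 0$, $\pi_S$ is not faithful, and the cycle is complete.
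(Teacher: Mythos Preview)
Your proof is correct and follows the same cycle (1) $\Rightarrow$ (2) $\Rightarrow$ (3) $\Rightarrow$ (1) as the paper, with essentially identical arguments for the first two implications. The only real difference is in (3) $\Rightarrow$ (1): the paper exhibits the element $a := s_{\mu\alpha}s_{\mu\alpha}^\ast - s_{\nu\alpha}s_{\nu\alpha}^\ast$ in $\ker\pi_S$ and defers the verification that $a \neq 0$ to \cite[Proposition~3.6]{robsims20071}, whereas you use $s_{\mu\alpha} - s_{\nu\alpha}$ and supply a self-contained gauge-action argument to show it is non-zero. Your choice makes membership in $\ker\pi_S$ immediate from Lemma~\ref{techlemma} (since $S_{\mu\alpha} = S_{\nu\alpha}$ on the nose), and the gauge argument spares the reader an external reference; the paper's element, being a difference of projections, simply matches what was already written down in \cite{robsims20071}.
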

\begin{proof}
 (1) $\Rightarrow$ (2). Suppose that $\Lambda$ has no local periodicity. Then $\Lambda$ satisfies the aperiodicity condition. Therefore, the Cuntz-Krieger uniqueness theorem given in \cite[Theorem 7.1]{fmy} yields that every ideal of $\Cstarl$ contains a vertex projection.

(2) $\Rightarrow$ (3). If $\textrm{ker}(\pi_S) \neq \{0\}$, then $s_v \in \textrm{ker}(\pi_S)$ for some $v \in \Lambda^0$, a contradiction.

(3) $\Rightarrow$ (1). Suppose that $\Lambda$ has local periodicity $n,m$ at $v \in \Lambda^0$. Let $\mu,\nu,\alpha$ be as in Lemma \ref{techlemma} and put $a:=s_{\mu \alpha} s^\ast_{\mu \alpha} - s_{\nu \alpha} s^\ast_{\nu \alpha}$. A proof identical to that of \cite[Proposition 3.6]{robsims20071} now shows that $a \in \textrm{ker}(\pi_S) \backslash \{0\}$.
\end{proof}

\begin{thm} Let $\Lambda$ be a finitely-aligned $k$-graph. Then $\Cstarl$ is simple if and only if $\Lambda$ is cofinal and has no local periodicity. \end{thm}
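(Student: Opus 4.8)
The plan is simply to assemble the two preceding propositions, since all the substantive work has been done in Sections~3 and~4. For the forward direction, assume $\Cstarl$ is simple. First I would rule out failure of no local periodicity: if $\Lambda$ had local periodicity, then by the implication (3)$\Rightarrow$(1) of Proposition~\ref{nlpprop} (in contrapositive form) the boundary-path representation $\pi_S$ would have non-zero kernel; since $\pi_S$ is itself non-zero (the boundary-path family satisfies $S_v \neq 0$, so in particular $\Cstarl \neq 0$), this kernel would be a proper non-zero ideal, contradicting simplicity. Hence $\Lambda$ has no local periodicity. Next I would rule out failure of cofinality: if $\Lambda$ were not cofinal, the argument given in the proof of (2)$\Rightarrow$(1) of Proposition~\ref{cofinalprop} produces a vertex $v \in \Lambda^0$ and a path $x \in \partial\Lambda$ with $v\Lambda x(n) = \emptyset$ for all $n \leq d(x)$, and the associated set $H_x$ is a saturated hereditary subset of $\Lambda^0$ that is neither empty nor all of $\Lambda^0$ (it contains $v$ but not $x(0)$), so $I_{H_x}$ is a non-trivial ideal of $\Cstarl$, again contradicting simplicity. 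Thus $\Lambda$ is cofinal.

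For the reverse direction, assume $\Lambda$ is cofinal and has no local periodicity, and let $I$ be a non-zero ideal of $\Cstarl$. By the implication (1)$\Rightarrow$(2) of Proposition~\ref{nlpprop}, $I$ contains a vertex projection $s_v$ for some $v \in \Lambda^0$. By the implication (1)$\Rightarrow$(2) of Proposition~\ref{cofinalprop}, it follows that $I = \Cstarl$. Since $\Cstarl \neq 0$, this shows that $\Cstarl$ has no non-trivial ideals, i.e.\ it is simple.

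There is essentially no hard step here; what remains is bookkeeping. The only points requiring a little care are (i) observing that $\Cstarl \neq 0$, so that ``simple'' carries its usual meaning and the kernel of $\pi_S$ is genuinely a proper ideal, and (ii) checking that the ideal $I_{H_x}$ arising from non-cofinality is proper and non-zero, which follows because $x(0) \notin H_x$ while $v \in H_x$, so $H_x$ is a non-trivial saturated hereditary set and the ideal--lattice correspondence of \cite{sims20062} then yields a non-trivial ideal containing a vertex projection.
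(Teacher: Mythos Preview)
Your proposal is correct, and for the reverse direction it is essentially identical to the paper's argument: use (1)$\Rightarrow$(2) of Proposition~\ref{nlpprop} to put a vertex projection in any non-zero ideal, then (1)$\Rightarrow$(2) of Proposition~\ref{cofinalprop} to conclude the ideal is everything.

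The one difference is that the paper's written proof only argues this reverse direction and is silent on the forward implication, whereas you supply it explicitly. Your contrapositive arguments via $\ker(\pi_S)$ and $I_{H_x}$ are fine. A slightly cleaner route, which is presumably what the paper has in mind, is to read the equivalences in Propositions~\ref{cofinalprop} and~\ref{nlpprop} directly: if $\Cstarl$ is simple (and non-zero), then trivially every non-zero ideal contains a vertex projection (namely $\Cstarl$ itself does), so condition~(2) of Proposition~\ref{nlpprop} holds and hence $\Lambda$ has no local periodicity; likewise any ideal containing some $s_v$ is non-zero and therefore equals $\Cstarl$, so condition~(2) of Proposition~\ref{cofinalprop} holds and $\Lambda$ is cofinal. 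Either way the content is the same; your version just unpacks the contrapositives of those equivalences rather than invoking them as stated.
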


\begin{proof} Propositions \ref{nlpprop} shows that every non-zero ideal of $\Cstarl$ contains a vertex projection. Proposition \ref{cofinalprop} shows that every such ideal is equal to all of $\Cstarl$. Therefore, $\Cstarl$ has no non-trivial ideals.

\end{proof}
 
\bibliography{simplicity}{}

\end{document}